\theoremstyle{plain}
\newtheorem{theorem}{Theorem}[section]
\newtheorem{corollary}{Corollary}[section]
\newtheorem{proposition}{Proposition}[section]
\theoremstyle{definition}
\newtheorem{definition}{Definition}[section]
\newtheorem*{conjecture*}{Conjecture}
\newtheorem{example}{Example}[section]
\begin{document}

\title[Fuzzy Sets and the Ershov Hierarchy]{Approximating Approximate Reasoning: Fuzzy Sets and the Ershov Hierarchy\thanks{The work was supported by Nazarbayev University Faculty Development
Competitive Research Grants 021220FD3851. Bazhenov and Ospichev were supported by Mathematical Center in Akademgorodok under agreement No. 075-15-2019-1613 with the Ministry of Science and Higher Education of the Russian Federation. San Mauro has been partially supported by  the Austrian Science Fund FWF, project M~2461.}}
%

%
\author{Nikolay Bazhenov}
\address{Sobolev Institute of Mathematics, 4 Acad. Koptyug Ave., Novosibirsk, 630090, Russia}
\email{bazhenov@math.nsc.ru}

\author{Manat Mustafa}
\address{Department of Mathematics, School of Sciences and Humanities, Nazarbayev University, 53 Qabanbaybatyr Avenue, Nur-Sultan, 010000, Kazakhstan}
\email{manat.mustafa@nu.edu.kz} 

\author{Sergei Ospichev}
\address{Sobolev Institute of Mathematics, 4 Acad. Koptyug Ave., Novosibirsk, 630090, Russia}
\email{ospichev@math.nsc.ru}

\author{Luca San Mauro}
\address{Institute of Discrete Mathematics and Geometry, Vienna University of Technology}
\email{luca.sanmauro@gmail.com}

%
%

\begin{abstract}
Computability theorists have introduced multiple hierarchies to measure the complexity of sets of natural numbers. The Kleene Hierarchy classifies sets according to the first-order complexity of their defining formulas. The Ershov Hierarchy classifies $\Delta^0_2$ sets with respect to the number of mistakes that are needed to approximate them. Biacino and Gerla extended the Kleene Hierarchy to the realm of fuzzy sets, whose membership functions range in a complete lattice $L$ (e.g., the real interval $[0; 1]_\mathbb{R}$). In this paper, we combine the Ershov Hierarchy and fuzzy set theory, by introducing and investigating the Fuzzy Ershov Hierarchy. In particular, we focus on the fuzzy $n$-c.e. sets which form the finite levels of this hierarchy. Intuitively, a fuzzy set is $n$-c.e.\ if its membership function can be approximated by changing monotonicity at most $n-1$ times. We prove that the Fuzzy Ershov Hierarchy does not collapse; that, in analogy with the classical case, each fuzzy $n$-c.e.\ set can be represented as a Boolean combination of fuzzy c.e.\ sets; but that, contrary to the classical case, the Fuzzy Ershov Hierarchy does not exhaust the class of all $\Delta^0_2$ fuzzy sets.

\end{abstract}

\keywords{Fuzzy set, computability theory, $n$-computably enumerable set, Ershov hierarchy.}

\maketitle              
%


\section{Introduction}

\emph{Crisp properties} on a given domain $\mathcal{D}$~--- i.e., properties whose membership functions range in the set $\{0,1\}$~--- can be naturally identified with subsets of $\mathcal{D}$. By adopting this perspective, one may regard classical computability theory as the study of the complexity of crisp properties on the set $\omega$ of the natural numbers: e.g., ``being even''  and ``being the code of a Turing machine which halts on a blank tape'' are examples of, respectively, a decidable crisp property and an undecidable one. 

Computability theorists have introduced multiple hierarchies to measure the complexity of crisp objects. Two such hierarchies will be relevant for the present paper. The Kleene Hierarchy classifies subsets of $\omega$ according to the first-or\-der complexity of their defining formulas within arithmetic. The Ershov Hierarchy concentrates on an important initial segment of the Kleene Hierarchy, that of  $\Delta^0_2$ sets (which coincide with the sets that are computable in the limit), by classifying such sets with respect to the
number of mistakes that are needed to approximate them.

Fuzzy sets, introduced by Zadeh~\cite{Zadeh-65} and later developed into a broad area of research, allow to mathematically study \emph{graded properties}, such as those properties with blurry boundaries, and to extend the scope of logic to \emph{approximate reasoning}. 

It is natural to ask how to introduce  computability theory within fuzzy mathematics. 
A first approach is to define fuzzy algorithms (as in, e.g., \cite{Zadeh-68,Santos-70,wiedermann2002,bedregal2006}), and then rebuild computability theory by permitting fuzzy computations. A parallel approach is to maintain ordinary Turing machines and just adopt them to calibrate the complexity of fuzzy sets. After all, well-established computa\-bi\-li\-ty-the\-o\-re\-tic hierarchies could be extended to the realm of fuzzy objects. This is the case for the Kleene Hierarchy, which has been extended to fuzzy sets by Biacino and Gerla~\cite{BG-89}, see also \cite{Gerla-book,Harkleroad-84,Harkleroad-88}.
 
In this paper, we introduce and investigate the Fuzzy Ershov Hierarchy. That is, we focus on the complexity of approximating fuzzy objects, which belong to the class $\Delta^0_2$. The key idea for evaluating this complexity is that of a \emph{mind change}, which is borrowed from Ershov~\cite{Ershov-I,Ershov-II,Ershov-III}. In the classical setting, an approximation to a set $A$ changes its mind on a given input $x$ by switching its guess on whether $x$ belongs to $A$ or not. Moving to fuzzy sets, mind changes will be formalized by changes in the monotonicity of approximating functions.  We will prove that, by allowing more and more  mind changes, we will be able  to capture larger and larger sub-classes of $\Delta^0_2$ fuzzy sets. In particular, it will follow that there are fuzzy sets which cannot be approximated only from above or below, but they require  approximations which 
 oscillate ``up and down'' on the membership degree of $x$, for some inputs $x$. 
 
The paper is arranged as follows. In Section~\ref{sect:prelim}, we recall  preliminaries concerning fuzzy sets, effective reals, and the Ershov Hierarchy. In Section~\ref{sect:results}, we introduce the Fuzzy Ershov Hierarchy, and we prove the main results of the paper. First, the hierarchy does not collapse (Proposition~\ref{prop:non-collapse}). Second, in analogy with the classical case, sets lying at the so-called finite levels of the Fuzzy Ershov Hierarchy can be represented as  Boolean combinations of fuzzy sets belonging to the first level, i.e. fuzzy c.e.\ sets (Theorem~\ref{theo:Boolean-fuzzy}). Third, contrary to the classical case, the Fuzzy Ershov Hierarchy does not exhaust the class of all $\Delta^0_2$ fuzzy sets (Proposition~\ref{prop:not-enough}). In the last section, we conclude by briefly discussing two natural ways of broadening the Fuzzy Ershov Hierarchy.


\section{Preliminaries} \label{sect:prelim}

We assume that the reader is familiar with the basic notions of computability theory. For the background, we refer to the monographs~\cite{Rogers-Book,Soare-16}. The preliminaries on fuzzy sets mainly follow~\cite{Gerla-book}.

As usual, one fixes an effective bijection $\nu\colon \mathbb{Q}\to \omega$. This convention allows to transfer familiar computability-theoretic notions to the rationals: for example, a crisp set $X\subseteq \mathbb{Q}$ is computable iff its image $\nu(X)$ is a computable subset of $\omega$.

Note that, as is custom in computability theory, this paper uses the term \emph{computably enumerable} (or \emph{c.e.}) in place of \emph{recursively enumerable}. For a set $X$, by $|X|$ we denote the cardinality of $X$.

\subsection{Fuzzy Subsets}

Let $\mathcal{L}$ be a complete lattice. A \emph{fuzzy subset} (or an $\mathcal{L}$-subset) of $\omega$ is an arbitrary function $A \colon \omega \to \mathcal{L}$. 
In this paper, for the sake of simplicity, we consider the case when $\mathcal{L}$ is equal to the real interval $[0;1]_{\mathbb{R}}$. A fuzzy subset $A$ is \emph{crisp} if $A(x) \in \{ 0,1\}$ for all $x\in\omega$. 

As mentioned in  the introduction, a fundamental tool for classifying the complexity of crisp subsets of $\omega$ is provided by the Kleene Arithmetical Hierarchy~\cite{Kleene-43} (see, e.g., Chapter~4 in~\cite{Soare-16} for a detailed discussion). Biacino and Gerla~\cite{BG-89} extended the Kleene Hierarchy to fuzzy subsets. 
In our paper, we work only with fuzzy subsets belonging to the levels $\Sigma^0_1$, $\Pi^0_1$, and $\Delta^0_2$ of the Kleene Hierarchy. Hence, we give formal definitions only for these levels. For more details, the reader is referred to~\cite{BG-89} and \S\,11.5 in~\cite{Gerla-book}. 
By $[0;1]_{\mathbb{Q}}$ we denote the set of all rational numbers $q$ such that $0\leq q \leq 1$.

\begin{definition}[\cite{BG-89}, see also \S\,11.2 in~{\cite{Gerla-book}}] \label{def:fuzzy_c_e}
	A fuzzy set $A$ is \emph{computably enumerable} (or belongs to the class $\Sigma^0_1$) if there is a computable function $f\colon \omega\times\omega \to [0,1]_{\mathbb{Q}}$ such that, for all $x\in\omega$, we have:
	\begin{itemize}		
		\item[(a)] $\lim_{s\to \infty}f(x,s)=A(x)$;
		
		\item[(b)] $(\forall s)(f(x,s+1)\geq f(x,s))$.
	\end{itemize}
	We say that such function $f$ is a \emph{$\Sigma^0_1$-approximation} of the fuzzy set $A$.
\end{definition}

Note that without loss of generality, one can always assume that in the definition above, $f(x,0)$ equals $0$. Hence, fuzzy c.e.\ sets may intuitively be regarded as fuzzy sets which can be approximated ``from below'', in the sense that approximations to fuzzy c.e.\ sets can only increase over time.

If $A$ and $B$ are fuzzy sets, then one can define set-theoretic operations on them:
\begin{itemize}
	\item Union: $(A\cup B)(x) = \max\{ A(x), B(x)\}$.
	
	\item Intersection: $(A\cap B)(x) = \min \{A(x), B(x)\}$.
	
	\item Complement: $\overline{A}(x) = 1 - A(x)$.
\end{itemize}

A fuzzy set $A$ is \emph{co-computably enumerable} (or belongs to the class $\Pi^0_1$) if its complement $\overline{A}$ is c.e. Equivalently (see Theorem~5.2 in~{\cite[Chap.~11]{Gerla-book}}), $A$ is co-c.e. if and only if there is a computable function $f\colon \omega^2 \to [0;1]_{\mathbb{Q}}$ such that, for all $x\in\omega$, we have:
\begin{itemize}
	\item[(\emph{a})] $\lim_{s} f(x,s)=A(x)$;
	\item[(\emph{b}$'$)] $(\forall s)(f(x,s+1)\leq f(x,s))$.
\end{itemize}
In the $\Pi^0_1$ case, we may assume that $f(x,0)=1$, for all $x$. So, fuzzy co-c.e.\ sets may be regarded as fuzzy sets which can be approximated ``from above''.

Finally, the main object of study of this paper are $\Delta^0_2$ fuzzy sets. A fuzzy set $A$ \emph{belongs to the class $\Delta^0_2$} if $A$ lies in both classes $\Sigma^0_2$ and $\Pi^0_2$ of the Kleene Hierarchy. In this paper, we  adopt the following equivalent definition (see Proposition~5.4 in {\cite[Chap.~11]{Gerla-book}}). 

\begin{definition}
A fuzzy set $A$ is $\Delta^0_2$ if and only if there is a computable function $f\colon \omega^2 \to [0;1]_{\mathbb{Q}}$ such that $\lim_{s}f(x,s)=A(x)$, for all $x\in\omega$. We call such function $f(x,s)$ a \emph{$\Delta^0_2$-approximation} of the fuzzy set $A$.
\end{definition}

\subsection{Effective Reals}

Here we briefly discuss some simple results which connect fuzzy subsets of $\omega$ with effectively approximable reals. 
We refer to Chapter~5 in~\cite{DH-book} for the detailed background.

We consider reals $\alpha \in [0;1]_{\mathbb{R}}$. A real $\alpha$ is \emph{left-c.e.} if the set $\{ q\in\mathbb{Q}\,\colon q < \alpha\}$ is c.e. A real $\alpha$ is \emph{right-c.e.} if the set $\{ q\in\mathbb{Q} \,\colon q > \alpha\}$ is c.e.
By working with the definitions, it is not hard to prove the following result.

\begin{proposition}\label{prop:left-right}
	 Let $A$ be a fuzzy subset of $\omega$. 
	 \begin{itemize} 
	 	\item[(1)]  $A$ is c.e. if and only if the reals $A(k)$, $k\in\omega$, are uniformly left-c.e., i.e. the set $\{ (k,q) \in \omega\times \mathbb{Q} \,\colon q < \alpha_k\}$ is c.e. 
	 	
	 	\item[(2)] $A$ is co-c.e. if and only if the reals $A(k)$, $k\in\omega$, are uniformly right-c.e., i.e. the set $\{ (k,q) \in \omega\times \mathbb{Q} \,\colon q > \alpha_k\}$ is c.e. 
	\end{itemize}
\end{proposition}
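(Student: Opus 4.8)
The plan is to prove Proposition~\ref{prop:left-right} by unwinding both definitions and showing that each direction follows from a straightforward manipulation of the relevant witnessing functions. Since statements (1) and (2) are perfectly dual (replacing $A$ by $\overline{A}$ and ``$<$'' by ``$>$'' swaps one for the other), I would prove (1) in full and then obtain (2) either by the same argument with inequalities reversed or by applying (1) to $\overline{A}$ and noting that $q < \overline{A}(k)$ iff $1-q > A(k)$. So the mathematical content lives entirely in part (1).

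\medskip

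For the forward direction of (1), I would start from a $\Sigma^0_1$-approximation $f(x,s)$ of $A$, i.e.\ a computable rational-valued function that is nondecreasing in $s$ with $\lim_s f(x,s) = A(x)$. The plan is to enumerate the set $S = \{(k,q) \in \omega \times \mathbb{Q} : q < A(k)\}$ by searching, for each pair $(k,q)$, for a stage $s$ with $q < f(k,s)$. The key observation is that since $f$ is nondecreasing and converges to $A(k)$, we have $q < A(k)$ if and only if $q < f(k,s)$ for some $s$: the ``if'' direction uses monotonicity (so $f(k,s) \leq A(k)$ for every $s$, hence $q < f(k,s) \leq A(k)$), and the ``only if'' direction uses that $f(k,s) \to A(k)$ from below, so some stage must exceed any rational strictly below the limit. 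This gives a $\Sigma^0_1$ description of $S$, so $S$ is c.e.

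\medskip

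For the reverse direction, I would assume $S = \{(k,q) : q < A(k)\}$ is c.e.\ and build a nondecreasing computable approximation $f$. The idea is to fix a computable enumeration of $S$ and, at stage $s$, let $f(k,s)$ be the largest rational of the form $q$ with small denominator (say, among finitely many candidate rationals inspected by stage $s$) that has already appeared in the enumeration as a pair $(k,q)$; equivalently, $f(k,s)$ records the best rational lower bound on $A(k)$ found so far. By construction $f(k,s)$ is nondecreasing in $s$ and computable, and because $S$ contains all rationals below $A(k)$, the supremum of the enumerated lower bounds is exactly $A(k)$, so $f(k,s) \to A(k)$. The one point requiring care here is ensuring $f$ takes values in $[0;1]_{\mathbb{Q}}$ and that the limit is genuinely $A(k)$ rather than something strictly smaller; this is where the density of the rationals is used, together with the fact that every rational strictly below $A(k)$ eventually enters $S$.

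\medskip

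I expect the main obstacle to be purely bookkeeping rather than conceptual: organizing the reverse-direction construction so that $f$ is simultaneously computable, monotone, and convergent to the correct real, uniformly in $k$. The uniformity clause (that the single set $\{(k,q) : q < A(k)\}$ is c.e., not merely that each $A(k)$ is individually left-c.e.) is exactly what makes the construction of a single computable two-variable $f$ possible, so I would be careful to keep all constructions uniform in $k$ throughout and to confirm that ``$A$ is c.e.'' in the sense of Definition~\ref{def:fuzzy_c_e} matches the uniform left-c.e.\ condition verbatim.
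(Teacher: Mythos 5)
Your argument is correct, and it is precisely the routine definition-unwinding the paper has in mind when it states this proposition without proof (``By working with the definitions, it is not hard to prove the following result''): the forward direction uses that a monotone approximation converges to its supremum, and the reverse direction records the best enumerated rational lower bound (defaulting to $0$ and clipping to $[0;1]_{\mathbb{Q}}$), with part (2) obtained by duality via $q \mapsto 1-q$. No gaps; the points you flag (uniformity in $k$, keeping values in $[0;1]_{\mathbb{Q}}$, and the limit being exactly $A(k)$ by density of the rationals) are exactly the right ones to check.
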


A real $\alpha$ is $\Delta^0_2$ if there is a computable sequence $(q_s)_{s\in\omega}$ of rationals such that $\alpha = \lim_s q_s$ (see, e.g., Theorem~5.1.3 in~\cite{DH-book}). We also observe the following, which is an immediate consequence of the definitions:

\begin{proposition}\label{remark:Delta_2}
	A fuzzy set $A$ is $\Delta^0_2$ if and only if the reals $A(k)$, $k\in\omega$, are uniformly $\Delta^0_2$, i.e., there is a computable sequence $(q_{k,s})_{k,s\in\omega}$ of rationals such that $A(k) = \lim_s q_{k,s}$, for all $k$.
\end{proposition}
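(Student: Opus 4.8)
The plan is to observe that the two conditions are essentially the same assertion once we reconcile the two codomains in play, namely $[0;1]_{\mathbb{Q}}$ (the rationals in the unit interval, into which a $\Delta^0_2$-approximation maps) and the full set $\mathbb{Q}$ (which the definition of uniformly $\Delta^0_2$ reals allows). I would prove the two directions separately, expecting the forward direction to be immediate and the reverse to require only a harmless truncation.

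For the forward direction, suppose $A$ is $\Delta^0_2$, witnessed by a $\Delta^0_2$-approximation $f\colon \omega^2 \to [0;1]_{\mathbb{Q}}$ with $\lim_s f(k,s) = A(k)$ for all $k$. Setting $q_{k,s} := f(k,s)$ immediately yields a computable double sequence of rationals (lying in $[0;1]_{\mathbb{Q}} \subseteq \mathbb{Q}$) with $A(k) = \lim_s q_{k,s}$ for every $k$, which is precisely the uniform $\Delta^0_2$ condition.

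For the reverse direction, suppose the reals $A(k)$ are uniformly $\Delta^0_2$, witnessed by a computable sequence $(q_{k,s})_{k,s\in\omega}$ of rationals with $A(k) = \lim_s q_{k,s}$. The only discrepancy with the fuzzy-set definition is that these rationals need not lie in $[0;1]_{\mathbb{Q}}$. I would close this gap by composing with the truncation map $t(q) = \max\{0, \min\{1, q\}\}$, which is computable on $\mathbb{Q}$ and sends every rational into $[0;1]_{\mathbb{Q}}$. Defining $f(k,s) := t(q_{k,s})$ gives a computable function $f\colon \omega^2 \to [0;1]_{\mathbb{Q}}$. Since $t$ is continuous and fixes every point of $[0;1]_{\mathbb{R}}$, and since $A(k) \in [0;1]_{\mathbb{R}}$, we get $\lim_s f(k,s) = t(\lim_s q_{k,s}) = t(A(k)) = A(k)$. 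Hence $f$ is a $\Delta^0_2$-approximation of $A$, so $A$ is $\Delta^0_2$.

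As both directions reduce to routine bookkeeping, there is no genuine obstacle here; the result is, as the surrounding text indicates, an immediate consequence of the definitions. The single point deserving a word of care is the truncation step in the reverse direction, which ensures membership in the lattice $[0;1]_{\mathbb{Q}}$ without disturbing the limit, and this works only because the target values $A(k)$ already lie in $[0;1]_{\mathbb{R}}$.
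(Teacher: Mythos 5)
Your proof is correct, and it matches the paper, which states this proposition without proof as ``an immediate consequence of the definitions''; both directions are exactly the routine verification you carry out. The truncation step in the reverse direction is the right (and only) point needing any care, and your justification of it is sound since each $A(k)$ lies in $[0;1]_{\mathbb{R}}$.
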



\subsection{The Classical Ershov Hierarchy}

We give few preliminaries on the Ershov Hierarchy~\cite{Ershov-I,Ershov-II,Ershov-III}; to distinguish it from the fuzzy analogue introduced below, we refer to this hierarchy as the Classical Ershov Hierarchy. For the sake of simplicity, here we discuss only the finite levels of the hierarchy (these finite levels are also called \emph{Difference Hierarchy} in the literature).  In this section, all subsets of $\omega$ are crisp.

\begin{definition}\label{def:Ershov}
	Let $n\geq 1$. A set $A\subseteq \omega$ is \emph{$n$-computably enumerable} (or \emph{$n$-c.e.}, or belongs to the class $\Sigma^{-1}_n$) if there is a computable function $f\colon \omega\times \omega \to \{ 0,1\}$ such that for all $x\in\omega$, we have
	\begin{itemize}
		\item $\lim_s f(x,s) = A(x)$;
		
		\item $f(x,0) = 0$;
		
		\item $|\{ s\,\colon f(x,s) \neq f(x,s+1)\}| \leq n$.
	\end{itemize}
	A set $A$ is \emph{co-$n$-computably enumerable} (or \emph{co-$n$-c.e.}, or belongs to the class $\Pi^{-1}_n$) if its complement $\overline{A}$ is $n$-c.e.
\end{definition}

Historically, the notion of $n$-c.e. sets was introduced by Putnam~\cite{Putnam} and Gold~\cite{Gold}. Note that
\[
	\Sigma^{-1}_n \cup \Pi^{-1}_n \subseteq \Sigma^{-1}_{n+1} \cap \Pi^{-1}_{n+1}.
\]
Ershov~\cite{Ershov-I} proved that for each $n\geq 1$, there exists an $n$-c.e. set $S_n$ such that every $n$-c.e. set $A$ is many-one reducible to $S_n$. In addition, $S_n$ does not belong to $\Pi^{-1}_n$. In particular, this implies that the Classical Ershov Hierarchy does not collapse. 

Sets from the class $\Sigma^{-1}_n$ can be represented as Boolean combinations of c.e. sets:

\begin{theorem}[{\cite{Ershov-I}}] \label{theo:classical-Boolean}
	Let $k$ be a natural number. A set $A\subseteq \omega$ is $(2k+1)$-c.e. if and only if there are c.e. sets $W_1,W_2,\dots,W_{2k-1},W_{2k},W_{2k+1}$ such that
	\[
		A = (W_1 \setminus W_2) \cup (W_3 \setminus W_4) \cup \dots \cup (W_{2k-1}\setminus W_{2k}) \cup W_{2k+1}.
	\]
	A set $A$ is $(2k+2)$-c.e. if and only if there are c.e. sets $W_1,W_2,\dots,W_{2k+1},W_{2k+2}$ such that
	\[
		A = (W_1 \setminus W_2) \cup (W_3 \setminus W_4) \cup \dots \cup (W_{2k-1}\setminus W_{2k}) \cup (W_{2k+1}\setminus W_{2k+2}).
	\]
\end{theorem}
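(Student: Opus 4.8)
The plan is to prove both implications by exploiting the tight correspondence between the number of mind changes of a $\{0,1\}$-valued approximation and the ``depth'' of a nested family of c.e.\ sets. For a computable approximation $f$ and an input $x$, write $m(x)=|\{s : f(x,s)\neq f(x,s+1)\}|$ for the number of mind changes on $x$. Since $f(x,0)=0$ and $f$ is two-valued, the limit satisfies $A(x)=\lim_s f(x,s)=1$ exactly when $m(x)$ is odd. This single observation drives the whole argument: oddness of the mind-change count will match membership in the ``odd'' difference terms $W_{2i-1}\setminus W_{2i}$, and the boundary conventions $W_{n+1}=\emptyset$ will distinguish the two parities $n=2k+1$ and $n=2k+2$.

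For the forward direction (from $n$-c.e.\ to a Boolean combination), I would start from a witnessing approximation $f$ with $f(x,0)=0$ and $m(x)\le n$ for all $x$, and set
\[
W_i=\{x : m(x)\ge i\}=\{x : f(x,\cdot)\text{ makes at least }i\text{ mind changes}\}.
\]
Each $W_i$ is c.e., since one enumerates $x$ into $W_i$ as soon as the $i$-th mind change on $x$ is observed; moreover the family is automatically nested, $W_1\supseteq W_2\supseteq\dots\supseteq W_n$, and $x\in W_i\setminus W_{i+1}$ holds precisely when $m(x)=i$. Hence $A=\{x:m(x)\text{ odd}\}=\bigcup_{i\text{ odd}}(W_i\setminus W_{i+1})$. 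When $n=2k+1$, the set $W_{2k+2}=\{x:m(x)\ge 2k+2\}$ is empty, so the last term collapses to $W_{2k+1}$, giving exactly the first displayed formula; when $n=2k+2$, the last odd term is $W_{2k+1}\setminus W_{2k+2}$, giving the second. Note that this direction in fact produces \emph{nested} sets, a conclusion stronger than the statement requires.

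For the reverse direction (from a Boolean combination to an $n$-c.e.\ set, with $n=2k+1$ or $n=2k+2$ the number of given sets), I would fix computable enumerations $(W_{i,s})_{s\in\omega}$ of the c.e.\ sets $W_1,\dots,W_n$, arranged so that $W_{i,0}=\emptyset$ and at most one element enters at most one $W_i$ at each stage, and take the canonical approximation $f(x,s)$ equal to $1$ if $x$ lies in the stage-$s$ Boolean combination $(W_{1,s}\setminus W_{2,s})\cup\dots$ and $0$ otherwise. Then $f(x,0)=0$, and $\lim_s f(x,s)=A(x)$ because each $W_{i,s}$ stabilizes on $x$. The crux is the bound $m(x)\le n$: for a fixed $x$, monotonicity of the enumerations means $x$ can enter each of the $n$ sets at most once, so there are at most $n$ stages at which the membership pattern of $x$ in $(W_1,\dots,W_n)$ changes; between two consecutive such stages $f(x,\cdot)$ is constant, and each such stage flips $f$ at most once. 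Thus $f$ makes at most $n$ mind changes, witnessing that $A$ is $(2k+1)$-c.e.\ (resp.\ $(2k+2)$-c.e.).

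I expect the main obstacle to be precisely this last counting step. The naive worry is that, as elements are enumerated into the $W_i$ in an arbitrary order, the canonical approximation might oscillate more than $n$ times; the point to get right is that it cannot, because a c.e.\ set never expels an element, so for each $x$ there are only $n$ ``entry events'' to account for, each contributing at most one mind change. Once this is seen, the two directions fit together cleanly, and the only remaining care is bookkeeping on parity and on the boundary conventions that separate the odd and even cases.
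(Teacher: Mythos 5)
Your proof is correct. Note that the paper itself gives no proof of this statement: it is quoted as a classical result of Ershov, so there is no in-paper argument to compare against. Your argument is the standard one. The forward direction via the layered sets $W_i=\{x : f(x,\cdot)\text{ makes at least } i \text{ mind changes}\}$, together with the parity observation that $A(x)=1$ exactly when the mind-change count is odd (since $f(x,0)=0$ and $f$ is two-valued), is exactly how this is usually done, and it indeed yields the stronger conclusion that the $W_i$ can be taken nested. The reverse direction is also sound: with a one-element-per-stage enumeration convention, the membership pattern of a fixed $x$ in $(W_1,\dots,W_n)$ can change at most $n$ times because c.e.\ sets never expel elements, and the canonical $\{0,1\}$-valued approximation of the Boolean combination can only flip at a stage where that pattern changes, giving the bound of $n$ mind changes. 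The boundary bookkeeping ($W_{2k+2}=\emptyset$ collapsing the last term to $W_{2k+1}$ in the odd case) is handled correctly.
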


We refer the reader to the survey~\cite{SYY-09} for more details on the Classical Ershov Hierarchy.


\section{Fuzzy Ershov Hierarchy} \label{sect:results}

In this section, we extend the classical Difference Hierarchy to the class of fuzzy subsets of $\omega$ (see Definition~\ref{def:fuzzy-n-c.e.} below). We establish some initial properties of this hierarchy: the hierarchy does not collapse (Subsection~\ref{subsect:not-collapse}); it is connected to the Boolean combinations of fuzzy c.e. sets (Subsection~\ref{subsect:Boolean}); the introduced levels of the hierarchy do not exhaust all fuzzy $\Delta^0_2$ sets (Subsection~\ref{subsect:not_enough}).

We begin by illustrating the intuition behind Definition~\ref{def:fuzzy-n-c.e.} with the following example. A fuzzy $\Delta^0_2$ set $A$ is called $3$-computably enumerable if it possesses a $\Delta^0_2$-approximation $f(x,s)$, which ``changes its mind'' \emph{at most two times}. So, for an element $x\in\omega$, the worst case behavior looks like this:
\begin{itemize}
	\item First, our approximation (non-strictly) increases~--- i.e. there is a stage $s_1$ such that $f(x,s) \leq f(x,s+1)$, for all $s< s_1$.
	
	\item Second, the approximation starts to decrease until some stage $s_2>s_1$: $f(x, s_1) > f(x, s_1 +1)$ and $f(x,s) \geq f(x,s+1)$, for $s_1 < s < s_2$.
	
	\item Then the final change of mind happens: the approximation will forever increase~--- $f(x,s_2) < f(x,s_2+1)$ and $f(x,s) \leq f(x,s+1)$ for all $s> s_2$.
\end{itemize}
In order to make this idea formal, we introduce \emph{mind change functions}, which ``track down'' the described mind changes.

\begin{definition}
	Let $f$ be a total function from $\omega \times \omega$ to $[0;1]_{\mathbb{Q}}$. Its \emph{$\Sigma$-mind change function} $m^{f}_{\Sigma} \colon \omega\times \omega \to \{ -1, 1\}$ is defined as follows.
	\begin{enumerate}
		\item $m^f_{\Sigma}(x,0) = 1$.
		
		\item Suppose that $m^f_{\Sigma}(x,s) = 1$.
		\begin{itemize} 
			\item If $f(x,s) \leq f(x,s+1)$, then $m^f_{\Sigma}(x,s+1) = 1$.
			
			\item If $f(x,s) > f(x,s+1)$, then $m^f_{\Sigma}(x,s+1) = -1$.
		\end{itemize}
		
		\item Suppose that $m^f_{\Sigma}(x,s) = -1$.
		\begin{itemize} 
			\item If $f(x,s) \geq f(x,s+1)$, then $m^f_{\Sigma}(x,s+1) = -1$.
			
			\item If $f(x,s) < f(x,s+1)$, then $m^f_{\Sigma}(x,s+1) = 1$.
		\end{itemize}		
	\end{enumerate}
	The \emph{$\Pi$-mind change function}  $m^{f}_{\Pi}(x,s)$ is defined similarly to $m^f_{\Sigma}$, with the following key modification: we put $m^f_{\Pi}(x,0) = -1$.
\end{definition}

Notice the following: if a function $f$ is computable, then both $m_{\Sigma}^f$ and $m_{\Pi}^f$ are also computable. Now we are ready to give the main definition.

\begin{definition}\label{def:fuzzy-n-c.e.}
	Let $n$ be a non-zero natural number. A fuzzy set $A$ is \emph{$n$-compu\-ta\-bly enumerable} if there is a computable function $f\colon \omega\times\omega \to [0,1]_{\mathbb{Q}}$ such that for all $x\in\omega$, we have:
	\begin{itemize}		
		\item $\lim_{s} f(x,s)=A(x)$;
		
		\item $f(x,0) = 0$;
		
		\item $|\{ s\in \omega \,\colon m^f_{\Sigma}(x,s+1) \neq  m^f_{\Sigma}(x,s)\}| \leq n-1$.
	\end{itemize}	
	A fuzzy set $A$ is \emph{co-$n$-computably enumerable} if its complement $\overline{A}$ is $n$-c.e.
\end{definition}

Note that 1-c.e. fuzzy sets are precisely the c.e. sets from Definition~\ref{def:fuzzy_c_e}. In addition, the following fact is immediate.

\begin{proposition}
	Let $n\geq 1$. A fuzzy set $A$ is co-$n$-c.e. if and only if there is a computable function $f\colon \omega\times\omega \to [0,1]_{\mathbb{Q}}$ such that for all $x\in\omega$, we have:
	\begin{itemize}		
		\item $\lim_{s} f(x,s)=A(x)$;
		
		\item $f(x,0) = 1$;
		
		\item $|\{ s\in \omega \,\colon m^f_{\Pi}(x,s+1) \neq  m^f_{\Pi}(x,s)\}| \leq n-1$.
	\end{itemize}	
\end{proposition}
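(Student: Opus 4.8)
The plan is to reduce the statement to Definition~\ref{def:fuzzy-n-c.e.} via the complementation map $f \mapsto 1-f$. By definition, $A$ is co-$n$-c.e.\ exactly when $\overline{A}$ is $n$-c.e., so Definition~\ref{def:fuzzy-n-c.e.} supplies a computable $g\colon \omega\times\omega \to [0;1]_{\mathbb{Q}}$ with $\lim_s g(x,s) = \overline{A}(x) = 1 - A(x)$, with $g(x,0)=0$, and with $|\{s : m^g_{\Sigma}(x,s+1)\neq m^g_{\Sigma}(x,s)\}| \leq n-1$. I would set $f(x,s) = 1 - g(x,s)$ and show that $f$ witnesses the right-hand side of the proposition. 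Since $q \mapsto 1-q$ is an involution on $[0;1]_{\mathbb{Q}}$, the converse direction is obtained by the very same construction with the roles of $f$ and $g$ exchanged, so it suffices to argue one implication.

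First I would dispatch the easy conditions. From $\lim_s g(x,s) = 1 - A(x)$ we obtain $\lim_s f(x,s) = A(x)$, and from $g(x,0)=0$ we obtain $f(x,0) = 1$. Computability of $f$ is immediate, since $g$ is computable and $[0;1]_{\mathbb{Q}}$ is closed under $q \mapsto 1-q$.

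The heart of the argument is the identity $m^f_{\Pi}(x,s) = -\,m^g_{\Sigma}(x,s)$ for all $x,s$, which I would prove by induction on $s$. The base case holds because $m^f_{\Pi}(x,0) = -1$ while $m^g_{\Sigma}(x,0) = 1$. For the inductive step, the crucial observation is that complementation reverses all the weak and strict inequalities between consecutive values: $g(x,s) \leq g(x,s+1)$ iff $f(x,s) \geq f(x,s+1)$, and $g(x,s) < g(x,s+1)$ iff $f(x,s) > f(x,s+1)$. The transition rules defining $m^g_{\Sigma}$ and $m^f_{\Pi}$ are literally the same (they differ only in the initial value); feeding the reversed inequalities into these rules, and using the inductive hypothesis that the two current mode-values are negatives of one another, one checks in each of the four cases (current mode $\pm 1$, next step weakly/strictly monotone) that the two new mode-values again come out as negatives of each other. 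This case analysis is the main, though entirely routine, obstacle: the only thing to be careful about is tracking which of the weak versus strict inequalities triggers which transition, and the reversal induced by complementation is exactly what makes the $\Pi$-function shadow the $\Sigma$-function with a sign flip.

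Finally, because $x \mapsto -x$ is a bijection of $\{-1,1\}$, the identity yields $m^f_{\Pi}(x,s+1) \neq m^f_{\Pi}(x,s)$ if and only if $m^g_{\Sigma}(x,s+1) \neq m^g_{\Sigma}(x,s)$. Hence the two mind change functions have identical sets of change-points for every $x$, so the bound $|\{s : m^g_{\Sigma}(x,s+1)\neq m^g_{\Sigma}(x,s)\}| \leq n-1$ transfers verbatim to $f$. This establishes that $f$ satisfies all three requirements, completing the forward direction; the symmetric construction applied to $f$ gives the converse, and the proposition follows.
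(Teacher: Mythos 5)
Your proof is correct and is exactly the complementation argument the paper has in mind: the paper states this proposition without proof ("the following fact is immediate"), and your reduction via $f = 1-g$ together with the inductive verification of $m^f_{\Pi}(x,s) = -m^g_{\Sigma}(x,s)$ is the intended justification, just written out in full.
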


\subsection{The Hierarchy Does Not Collapse} \label{subsect:not-collapse}

In order to show the non-collapse of the hierarchy, it is sufficient to prove the following:

\begin{proposition}\label{prop:non-collapse}
	Let $A$ be a crisp subset of $\omega$. Then $A$ is $n$-c.e. in the Classical Ershov Hierarchy if and only if $A$ is $n$-c.e. in the Fuzzy Ershov Hierarchy. A similar fact is true for co-$n$-c.e. sets.
\end{proposition}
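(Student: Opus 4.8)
The plan is to prove both directions by comparing, for crisp approximations, the two quantities that govern the competing hierarchies: the number of value changes (Definition~\ref{def:Ershov}) and the number of monotonicity changes recorded by $m^f_\Sigma$ (Definition~\ref{def:fuzzy-n-c.e.}). The guiding observation is that a $\{0,1\}$-valued approximation with $f(x,0)=0$ changes value in the forced alternating pattern $0\to 1,\, 1\to 0,\, 0\to 1,\dots$, and that $m^f_\Sigma$ registers a mind change at \emph{every} value change except the first (the first being an increase, which leaves $m^f_\Sigma$ at its initial value $1$). This should give the exact identity (number of mind changes) $=\max\{0,\ (\text{number of value changes})-1\}$, which is precisely the bookkeeping that aligns the ``$\leq n$'' bound of Definition~\ref{def:Ershov} with the ``$\leq n-1$'' bound of Definition~\ref{def:fuzzy-n-c.e.}.

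For the forward direction (Classical $\Rightarrow$ Fuzzy), I would take a classical $n$-c.e.\ approximation $f\colon \omega\times\omega\to\{0,1\}$ of $A$ and reinterpret it verbatim as a function into $[0;1]_{\mathbb{Q}}$. A short case analysis on the definition of $m^f_\Sigma$, using that $f$ is $\{0,1\}$-valued, yields the identity above, so each column has at most $(\text{value changes})-1\leq n-1$ mind changes. Since the conditions $\lim_s f(x,s)=A(x)$ and $f(x,0)=0$ transfer unchanged, $A$ is fuzzy $n$-c.e.

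For the reverse direction (Fuzzy $\Rightarrow$ Classical), the input is now a genuinely \emph{rational}-valued approximation $f$ with at most $n-1$ mind changes and $\lim_s f(x,s)=A(x)\in\{0,1\}$, and I must discretize it. I would threshold at $1/2$: set $g(x,s)=1$ if $f(x,s)>1/2$ and $g(x,s)=0$ otherwise. Then $g$ is computable, $g(x,0)=0$ because $f(x,0)=0$, and since $A(x)\in\{0,1\}$ the sequence $f(x,\cdot)$ is eventually strictly on one side of $1/2$, so $\lim_s g(x,s)=A(x)$. The crux is bounding the value changes of $g$: the at most $n-1$ mind changes split the stages into at most $n$ maximal phases on which $m^f_\Sigma$ is constant, and on each such phase $f(x,\cdot)$ is (non-strictly) monotone, hence crosses $1/2$ at most once, contributing at most one value change to $g$. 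Summed over phases this gives at most $n$ value changes, so $A$ is classical $n$-c.e.

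I expect the reverse direction to be the main obstacle, as it is the only place where the rational values must actually be collapsed to $\{0,1\}$. The delicate point is that a naive split would also allow a threshold crossing on each of the $n-1$ \emph{transition} steps, inflating the bound; the fix is to absorb each transition step into the phase it enters. Here one uses that a transition into an increasing phase is itself a strict increase (and into a decreasing phase a strict decrease), so that each phase \emph{together with its entry step} is monotone and therefore still crosses $1/2$ at most once—pinning the value-change count to exactly the phase count $n$. Finally, the co-$n$-c.e.\ statement follows immediately by complementation: for crisp $A$ the classical and fuzzy complements coincide, since $\overline{A}(x)=1-A(x)$ is again crisp, so applying the equivalence just proved to $\overline{A}$ gives the result.
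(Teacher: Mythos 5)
Your proposal is correct and follows essentially the same route as the paper: the forward direction reinterprets the $\{0,1\}$-valued approximation verbatim and uses the identity ``mind changes $=$ value changes minus one,'' and the reverse direction thresholds at $1/2$ exactly as the paper does. The only cosmetic difference is in the reverse direction's bookkeeping—you bound the value changes of $g$ from above by the number of monotone phases (absorbing each transition step into the phase it enters), whereas the paper bounds the mind changes of $f$ from below by the value changes of $g$—but these are the same combinatorial observation read in opposite directions.
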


Indeed, since the Classical Difference Hierarchy does not collapse, Proposition~\ref{prop:non-collapse} implies that our hierarchy is also non-collapsing.

\begin{proof}[of Proposition~\ref{prop:non-collapse}]
	$(\Rightarrow)$. Suppose that $A$ is $n$-c.e. in the classical sense. We fix a computable function $f\colon \omega^2 \to \{ 0,1\}$ satisfying the conditions from Definition~\ref{def:Ershov}. It is clear that $f(x,s)$ is a $\Delta^0_2$-approximation of $A$, treated as a fuzzy $\Delta^0_2$ set.
	
	For an element $x\in\omega$, consider all stages $s_1 < s_2 <\dots < s_k$ (note that $k\leq n$) such that $f(x,s_i) \neq f(x,s_i+1)$. A straightforward analysis shows the following:
	\begin{itemize}
		\item If $s \leq s_1$, then $f(x,s) = 0$ and $m^{f}_{\Sigma}(x,s) = 1$.
		
		\item If $s_{2\ell+1} +1 \leq s \leq s_{2\ell+2}$, then $f(x,s) = 1$ and $m^{f}_{\Sigma}(x,s) = 1$.
		
		\item If $s_{2\ell+2} +1 \leq s \leq s_{2\ell+3}$, then $f(x,s) = 0$ and $m^{f}_{\Sigma}(x,s) = -1$.
	\end{itemize}
	This implies that $|\{ s\,\colon m^f_{\Sigma}(x,s+1) \neq m^f_{\Sigma}(x,s)\}| \leq k-1 \leq n-1$. We deduce that the approximation $f$ witnesses that the set $A$ is fuzzy $n$-c.e.
	
	$(\Leftarrow)$. Suppose that a crisp $A$ is fuzzy $n$-c.e. Fix a $\Delta^0_2$-approximation $f\colon \omega^2 \to [0;1]_{\mathbb{Q}}$ satisfying  Definition~\ref{def:fuzzy-n-c.e.}. We define a new approximation
	\[
		g(x,s) = \begin{cases}
			1, & \text{if } f(x,s) > 1/2,\\
			0, & \text{if } f(x,s) \leq 1/2.
		\end{cases}
	\]
	Since $A$ is crisp, it is clear that $A(x) = \lim_s g(x,s)$. Notice that $g(x,0) = f(x,0) = 0$.
	
	For an element $x\in\omega$, consider all stages $s_1 < s_2 < \dots < s_k$ such that $g(x,s_i) \neq g(x,s_i+1)$. For $i\leq k$, one can show the following: 
	\begin{itemize}
		\item If $i=2\ell+1$, then $f(x,s_i) \leq 1/2$, $f(x,s_i +1) > 1/2$, and $m^{f}_{\Sigma}(x,s_i + 1) = 1$.
		
		\item If $i=2\ell+2$, then $f(x,s_i) > 1/2$, $f(x,s_i +1) \leq 1/2$, and $m^{f}_{\Sigma}(x,s_i + 1) = -1$.
	\end{itemize}
	In turn, this implies $k-1 \leq |\{ s\,\colon m^f_{\Sigma}(x,s+1) \neq m^f_{\Sigma}(x,s)\}| \leq n-1$. Hence, $k\leq n$, and the function $g(x,s)$ witnesses that the set $A$ is $n$-c.e. in the classical sense.
	\qed
\end{proof}

\subsection{Boolean Combinations of Fuzzy C.E. Sets} \label{subsect:Boolean}

We show that similarly to the Classical Ershov Hierarchy (Theorem~\ref{theo:classical-Boolean}), $n$-c.e. fuzzy sets admit natural presentations via Boolean combinations of c.e. sets.

\begin{theorem}\label{theo:Boolean-fuzzy}
	Let $n\in \{2 k+1, 2k+2\}$. A fuzzy set $C$ is $n$-c.e. if and only if there are fuzzy c.e. sets $A_1,B_1,A_2,B_2,\dots,A_{k+1},B_{k+1}$ such that:
	\begin{itemize}
		\item $C = (A_1 \cap \overline{B_1}) \cup (A_2 \cap \overline{B_2}) \cup \dots \cup (A_{k+1} \cap \overline{B_{k+1}})$;
		
		\item if $n = 2k+1$, then $B_{k+1} = \emptyset$.
	\end{itemize}
\end{theorem}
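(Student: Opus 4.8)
The plan is to prove both implications by reducing to the single one-hump building block $A\cap\overline{B}$ with $A,B$ fuzzy c.e. First I would record the shape of such a block: if $a_i(x,s)\nearrow A_i(x)$ and $b_i(x,s)\nearrow B_i(x)$ are the monotone approximations, normalised so that $a_i(x,0)=b_i(x,0)=0$, then $c_i(x,s):=\min\{a_i(x,s),\,1-b_i(x,s)\}$ satisfies $c_i(x,0)=0$, is non-decreasing as long as $a_i\le 1-b_i$ and non-increasing thereafter, and converges to $(A_i\cap\overline{B_i})(x)$. Thus each block has a unimodal (up-then-down) approximation, i.e.\ it is $2$-c.e.; and when $B_i=\emptyset$ the block reduces to the plainly non-decreasing $a_i$, i.e.\ it is $1$-c.e. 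For the direction $(\Leftarrow)$ I would then set $f(x,s):=\max_{1\le i\le k+1}c_i(x,s)$. Since $\max$ commutes with the finite limit we get $\lim_s f(x,s)=C(x)$ and $f(x,0)=0$, so it only remains to bound the mind changes of $f$. This is the combinatorial heart of the direction: the pointwise maximum of $m$ unimodal sequences has at most $m$ peaks, hence at most $2m-1$ monotonicity changes. With $m=k+1$ this gives at most $2k+1=n-1$ changes, witnessing that $C$ is $(2k+2)$-c.e.; in the odd case $n=2k+1$ the distinguished block with $B_{k+1}=\emptyset$ is only a non-decreasing ``half-hump'', so it contributes no peak and forces the tail of $f$ to be non-decreasing, lowering the bound to $2k=n-1$ changes.

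For the direction $(\Rightarrow)$ I would work directly from a witnessing approximation $f$ as in Definition~\ref{def:fuzzy-n-c.e.}. Since $f$ is computable, so is $m^f_{\Sigma}$, and hence the successive \emph{up-phases} and \emph{down-phases} of $f(x,\cdot)$ can be detected online by counting the switches of $m^f_{\Sigma}$. For each $i\le k+1$ I would let $A_i(x)$ be the $i$-th peak $P_i$, approximated from below by $a_i(x,s)$ that equals $0$ until up-phase $i$ begins, then tracks the running maximum of $f$ over up-phase $i$, and freezes at $P_i$ once that phase ends (so $a_i\equiv 0$, i.e.\ $A_i=\emptyset$, if up-phase $i$ never occurs). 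Dually I would put $\overline{B_i}(x)=\min_{j\ge i}V_j$, the suffix-minimum of the valleys, approximated from above by $\min_{j\ge i}\beta_j(x,s)$, where $\beta_j$ equals $1$ until down-phase $j$ begins and then tracks the running minimum of $f$ over down-phase $j$ (so $\beta_j\equiv 1$, contributing $V_j=1$, if down-phase $j$ never occurs). The approximation $\min_{j\ge i}\beta_j$ is non-increasing, so by Proposition~\ref{prop:left-right} each $\overline{B_i}$ is co-c.e.\ and hence each $B_i$ is fuzzy c.e., uniformly; the $A_i$ are fuzzy c.e.\ for the same reason.

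The correctness of $(\Rightarrow)$ then reduces to the identity $\max_{i}\min\{P_i,\ \min_{j\ge i}V_j\}=C(x)$. I would prove ``$\le$'' by checking that every block is $\le C(x)$: if $P_i\le C(x)$ this is immediate, while if $P_i>C(x)$ the trajectory must descend below $P_i$ to reach its limit, so the last nonempty valley $V_\ell$ (which equals $C(x)$ when the final phase is a descent, and satisfies $V_\ell\le P_{\ell+1}=C(x)$ when it is an ascent) has index $\ell\ge i$ and lies inside the suffix-minimum, capping the block at $C(x)$. For ``$\ge$'' I would exhibit one block equal to $C(x)$, namely the one indexed by the last nonempty phase: there $\min_{j\ge i}V_j=C(x)$ while $P_i\ge C(x)$, so the block equals $C(x)$. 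Finally, when $n=2k+1$ the bound of $n-1=2k$ mind changes forbids a $(k+1)$-st down-phase for \emph{every} $x$, whence $\beta_{k+1}\equiv 1$ and $B_{k+1}=\emptyset$ automatically, exactly as the statement requires.

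I expect the main obstacle to be the mind-change count in $(\Leftarrow)$: bounding the number of monotonicity changes of a pointwise maximum of unimodal approximations is a Davenport--Schinzel-flavoured estimate, and it must be carried out against the precise tie-breaking conventions ($\le$ versus $<$, and the treatment of plateaus) built into $m^f_{\Sigma}$, rather than for idealized strictly unimodal real functions. A secondary technical point, in $(\Rightarrow)$, is to make the phase detection and the definitions of the $a_i$ and $\beta_j$ genuinely uniform in $x$ and to confirm they behave correctly on degenerate trajectories that use strictly fewer than $n-1$ mind changes, so that some phases are empty; the freezing conventions ($a_i=0$ and $\beta_j=1$ on absent phases) are designed precisely to keep every block $\le C(x)$ in those cases while still letting the last nonempty phase attain $C(x)$.
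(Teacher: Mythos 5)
Your proposal is correct, and its overall architecture coincides with the paper's: both directions run through the representation $h_D(x,s)=\max_i \min\{h_{A_i}(x,s),\,1-h_{B_i}(x,s)\}$, and the backward direction rests on the same two facts --- each block is unimodal because once $1-h_{B_i}<h_{A_i}$ holds it holds forever (the paper's observation $(\ast)$), and each descent of the upper envelope must be charged to a block that has newly passed its peak (the paper's growing set $X_s$). The one substantive difference is in the forward construction of the $B_i$: the paper lets $h_{B_i}$ copy $1-f$ during down-phase $i$ and then jump to $1$, so every block except the one owning the \emph{final} monotonicity phase evaluates to $0$ and the verification collapses to a two-case computation; you instead take $\overline{B_i}$ to be the suffix-minimum of the valley values, so several blocks may be nonzero and you must separately verify the identity $\max_i\min\{P_i,\min_{j\ge i}V_j\}=C(x)$. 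Both choices work; the paper's makes the verification trivial, yours is closer in form to the classical Ershov decomposition (Theorem~\ref{theo:classical-Boolean}). Two points in your sketch need to be tightened, though neither threatens correctness: (i) ``at most $m$ peaks'' by itself only bounds the number of monotonicity changes by $2m$, not $2m-1$; to get $2m-1$ you must also argue that after the $m$-th descent all $m$ blocks are past their peaks, so the envelope can never rise again --- this is exactly what the charging argument via $X_s$ supplies; (ii) in the odd case the tail of $f$ is not literally non-decreasing after the $k$-th descent: the envelope is then a maximum of non-increasing terms together with the single non-decreasing $a_{k+1}$, which permits one final rise (after which $a_{k+1}$ dominates forever), and it is this ``at most $k$ falls, hence at most $k$ rises'' count that yields the bound $2k=n-1$.
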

\begin{proof}
		$(\Rightarrow)$. Let $f(x,s)$ be a $\Delta^0_2$-approximation which witnesses the fact that $C$ is $n$-c.e. We define the desired fuzzy c.e. sets $A_i$ and $B_i$ via their $\Sigma^0_1$-approxi\-ma\-ti\-ons $h_{A_i}$ and $h_{B_i}$ (in the sense of Definition~\ref{def:fuzzy_c_e}), respectively.
		
		The intuition behind these c.e. sets is as follows. For an element $x\in\omega$, we split $\omega$ into disjoint intervals: $[0;a_0)$, $[a_0;b_0)$, $[b_0;a_1)$, $[a_1;b_1)$, etc. Our function $f(x,\cdot)$ (non-strictly) increases on the intervals $[0;a_0)$, $[b_0;a_1)$, $[b_1;a_2)$, etc. The function decreases on the rest of the intervals.
		\begin{itemize}
			\item The approximation $h_{A_1}$ of the set $A_1$ looks like this: it copies $f(x,\cdot)$ on the interval $[0;a_0)$, and then stabilizes, i.e. $h_{A_1}(x,s) = h_{A_1}(x,a_0-1)$ for all $s\geq a_0$.
			
			\item The function $h_{B_1}(x,\cdot)$ equals zero on $[0;a_0)$. Then it copies $1-f(x,\cdot)$ on the interval $[a_0;b_0)$. After that, $h_{B_1}(x,\cdot)$ equals \emph{one}.
			
			\item The function $h_{A_2}$ equals zero on $[0;b_0)$. Then it copies $f(x,\cdot)$ on the interval $[b_0;a_1)$; after that $h_{A_2}$ stabilizes. Et cetera.
		\end{itemize}		
		Formally speaking, for a non-zero $i \leq k+1$, we define:
		\begin{gather*}
			h_{A_i}(x,s) = \begin{cases}
				0,  \quad\text{if }  |\{ t\leq s \,\colon m^{f}_{\Sigma}(x,t+1) \neq m^{f}_{\Sigma}(x,t)\}| < 2i-2,\\
				f(x,s),  \quad\text{if } |\{ t\leq s \,\colon m^{f}_{\Sigma}(x,t+1) \neq m^{f}_{\Sigma}(x,t)\}| = 2i-2,\\
				h_{A_i}(x,s-1),  \quad\text{otherwise};
			\end{cases}\\
			h_{B_i}(x,s) = \begin{cases}
				0,  \quad\text{if } |\{ t\leq s \,\colon m^{f}_{\Sigma}(x,t+1) \neq m^{f}_{\Sigma}(x,t)\}| < 2i-1,\\
				1-f(x,s),  \quad\text{if } |\{ t\leq s \,\colon m^{f}_{\Sigma}(x,t+1) \neq m^{f}_{\Sigma}(x,t)\}| = 2i-1,\\
				1,  \quad\text{otherwise}.
			\end{cases}
		\end{gather*}
		It is not hard to see that these approximations induce fuzzy c.e. sets. In addition, if $n=2k+1$, then $B_{k+1}(x) = 0$ for all $x$.
		
		Let $D$ be the fuzzy set $(A_1 \cap \overline{B_1}) \cup \dots \cup (A_{k+1} \cap \overline{B_{k+1}})$. We consider its natural $\Delta^0_2$-approximation
		\begin{equation}\label{equ:max-min}
			h_D(x,s) = \max\{ \min\{ h_{A_i}(x,s), 1-h_{B_i}(x,s)\} \,\colon 1\leq i \leq k+1\}.
		\end{equation}
		Consider the value $v^{\ast} = |\{ t\in\omega \,\colon m^{f}_{\Sigma}(x,t+1) \neq m^{f}_{\Sigma}(x,t)\}|$. 
		
		If $v^{\ast} = 2i-2$, then there is a stage $s^{\ast}$ such that for all $s\geq s^{\ast}$, we have
		$h_D(x,s) = h_{A_i}(x,s) = f(x,s)$. This implies that $D(x) = C(x)$.
		
		If $v^{\ast} = 2i-1$, then consider the highest index $s_0$ such that $h_{A_i}(x,s_0) = f(x,s_0)$. Then for every $s\geq s_0 +1$, we have $h_{A_i}(x,s) = f(x,s_0)$, $h_{B_i}(x,s) = 1 - f(x,s)$, and 
		\[
			h_{D}(x,s) = \min (h_{A_i}(x,s), 1-h_{B_i}(x,s)) = \min(f(x,s_0), f(x,s)) = f(x,s).
		\]
		Again, $D(x) = C(x)$. We deduce that the fuzzy sets $C$ and $D$ are equal.
		\smallskip
		
		$(\Leftarrow)$. Let $D$ be a fuzzy $\Delta^0_2$ set defined via the approximation $h_D$ from~(\ref{equ:max-min}). We prove that \emph{this} approximation $h_D$ witnesses the fact that $D$ is $n$-c.e.
		
		First, we note the following easy observation (it follows from computable enumerability of fuzzy sets $A_i$ and $B_i$): 
		\begin{itemize}
			\item[($\ast$)] If $1-h_{B_i}(x,s_0) < h_{A_i}(x,s_0)$ for some $s_0$, then we have $1-h_{B_i}(x,s) < h_{A_i}(x,s)$ \emph{for all} $s\geq s_0$.
		\end{itemize}
		An informal intuition concerning further proof is as follows. Every (approximation of the) real $(A_i \cap\overline{B}_i)(x)$ can be treated as a ``hill'': first we go up, copying the function $h_{A_i}(x,\cdot)$. When we see the inequality $1-h_{B_i}(x,s_0) < h_{A_i}(x,s_0)$, we can only go down. Coming back to the whole picture of $h_D$: whenever the mind-change function $m^{h_D}_{\Sigma}(x,\cdot)$ changes from $+1$ to $-1$, it happens because we encountered the \emph{top} of one of the ``hills''.
		
		At a stage $s$, consider the following sets: $X_s = \{  i\,\colon 1-h_{B_i}(x,s) < h_{A_i}(x,s)\}$ and $Y_s = \{1,2,\dots,k+1\} \setminus X_s$. Observation $(\ast)$ implies that $X_{s} \subseteq X_{s+1}$ for every $s$. In addition, $X_0=\emptyset$.
		
		It is not hard to deduce the following equation:
		\[
			h_D(x,s) = \max \{ \max\{ 1-h_{B_i}(x,s)\,\colon i \in X_s\}, \max\{ h_{A_i}(x,s)\,\colon i \in Y_s\} \}.
		\]
		Note that for a fixed non-empty set $Z$, the function $\max\{ 1-h_{B_i}(x,s)\,\colon i \in Z\}$ is non-increasing, and  $\max\{ h_{A_i}(x,s)\,\colon i \in Z\}$ is non-decreasing.
		
		Suppose that $m^{h_D}_{\Sigma}(x,s) = 1$ and $m^{h_D}_{\Sigma}(x,s+1) = -1$. Choose the greatest $s'< s$ such that either $s'=0$, or $s'>0$ and $m^{h_D}_{\Sigma}(x,s')=-1$. Towards a contradiction, assume that $X_{s+1}=X_{s'}$. 
		
		Then on one hand, we have 
		\[
			h_D(x,s) = \max\{ 1-h_{B_i}(x,s)\,\colon i \in X_{s'}\} > \max\{ h_{A_i}(x,s)\,\colon i \in Y_{s'}\}.
		\]
		Indeed, if $h_D(x,s)$ equals $\max\{ h_{A_i}(x,s)\,\colon i \in Y_{s'}\}$, then we would have 
		\[
			h_D(x,s+1)  = \max\{ h_{A_i}(x,s+1) \,\colon i \in Y_{s'}\} \geq h_D(x,s),
		\]
		which contradicts the fact that $m^{h_D}_{\Sigma}(x,s+1) = -1$.		
		
		On the other hand, every $t$ such that $s'< t \leq s$ satisfies 
		\[
			h_D(x,t)  = \max\{ h_{A_i}(x,t)\,\colon i \in Y_{s'} \}.
		\] 
		We obtain a contradiction. Therefore, $X_{s+1}\neq X_{s'}$.
		
		We deduce that for each stage $s$ with $m^{h_D}_{\Sigma}(x,s) = 1$ and $m^{h_D}_{\Sigma}(x,s+1) = -1$, at least one new element is added to the growing set $X = \bigcup_{t\in \omega} X_t$. 
		
		Suppose that $n=2k+2$. Then one can show that $|X| \leq k+1$. We notice the following: if $|X|$ is less than $k+1$, then the number of monotonicity breaks (of the function $m^{h_D}_{\Sigma}(x,\cdot)$) will be strictly less than the corresponding number for the case $|X|=k+1$. Hence, one can consider only the case when $|X|=k+1$.
		
		If $|X|=k+1$, then there is a stage $s^{\ast}$ such that for all $s\geq s^{\ast}$, we have $h_D(x,s) = \max\{ 1-h_{B_i}(x,s)\,\colon i \in X\}$, and this function can only decrease. A not difficult combinatorial argument shows that $|\{ s\in\omega \,\colon m^{h_D}_{\Sigma}(x,s+1) \neq m^{h_D}_{\Sigma}(x,s)\}| \leq 2k+1$.
		
		If $n=2k+1$, then $|X|\leq k$. An argument similar to the one above shows that one can consider only the case when $|X|$ equals $k$.
		
		If $|X|=k$, then there is a stage $s^{\ast}$ such that for $s\geq s^{\ast}$, we have
		$h_D(x,s) = \max\{ \max\{ 1-h_{B_i}(x,s)\,\colon i \in X \},  h_{A_{k+1}}(x,s)\}$.
		One can show that in this case, $|\{ s\,\colon m^{h_D}_{\Sigma}(x,s+1) \neq m^{h_D}_{\Sigma}(x,s)\}| \leq 2k$. Theorem~\ref{theo:Boolean-fuzzy} is proved.
		\qed
\end{proof}

\begin{corollary}
	Every finite Boolean combination of fuzzy c.e. sets is an $n$-c.e. set, for some $n\geq 1$.
\end{corollary}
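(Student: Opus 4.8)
The plan is to prove the stronger statement that the union of all finite levels of the Fuzzy Ershov Hierarchy, $\mathcal{F} = \bigcup_{n\geq 1}\{A : A\text{ is fuzzy }n\text{-c.e.}\}$, is closed under the three fuzzy set operations $\cup$, $\cap$, and complement. Since every fuzzy c.e.\ set lies in $\mathcal{F}$ (it is $1$-c.e.\ by the remark following Definition~\ref{def:fuzzy-n-c.e.}), and every finite Boolean combination of fuzzy c.e.\ sets is obtained by finitely many applications of these operations, closure of $\mathcal{F}$ yields the corollary at once. Throughout I would exploit the normal form of Theorem~\ref{theo:Boolean-fuzzy}: a fuzzy set belongs to $\mathcal{F}$ precisely when it can be written as a finite union $\bigcup_i (A_i \cap \overline{B_i})$ with all $A_i,B_i$ fuzzy c.e.

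Closure under union and intersection is routine and I would dispatch it first. For union, concatenating the normal forms of $C=\bigcup_{i\leq p}(A_i\cap\overline{B_i})$ and $C'=\bigcup_{j\leq q}(A_j'\cap\overline{B_j'})$ produces a set already in the same form, so $C\cup C'\in\mathcal{F}$. For intersection I would distribute and apply De Morgan, obtaining $C\cap C' = \bigcup_{i,j}\bigl((A_i\cap A_j')\cap\overline{(B_i\cup B_j')}\bigr)$. Here I use that the intersection of two fuzzy c.e.\ sets is fuzzy c.e.\ (the pointwise minimum of two non-decreasing rational approximations is non-decreasing, with initial value $0$) and, dually, that the union of two fuzzy c.e.\ sets is fuzzy c.e. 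Hence each summand again has the form $A\cap\overline{B}$ with $A,B$ fuzzy c.e., and $C\cap C'\in\mathcal{F}$.

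The main obstacle is closure under complement, which I would reduce to the fuzzy analogue of the classical inclusion $\Pi^{-1}_n\subseteq\Sigma^{-1}_{n+1}$: every co-$n$-c.e.\ fuzzy set is $(n+1)$-c.e. Given a $\Pi$-approximation $f$ of $A$ with $f(x,0)=1$ and at most $n-1$ breaks of $m^f_\Pi$, I would set $g(x,0)=0$ and $g(x,s)=f(x,s-1)$ for $s\geq 1$; this preserves the limit, satisfies $g(x,0)=0$, and the prepended strict increase from $0$ to $1$ creates no break under the $\Sigma$-convention, so the breaks of $m^g_\Sigma$ are exactly those of $m^f_\Sigma$. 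The delicate point is comparing the two conventions on the \emph{same} function: since every strict increase forces the mind-change value to $+1$ and every strict decrease forces it to $-1$, the functions $m^f_\Sigma$ and $m^f_\Pi$ agree from the first strict change onward and differ only on the initial constant stretch, so the break-count of $m^f_\Sigma$ exceeds that of $m^f_\Pi$ by at most one. Thus $g$ has at most $n=(n+1)-1$ breaks and witnesses that $A$ is $(n+1)$-c.e.

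Assembling the pieces: if $A$ is fuzzy $n$-c.e., then $\overline{A}$ is co-$n$-c.e.\ by definition, hence $(n+1)$-c.e.\ by the inclusion just established, so $\mathcal{F}$ is closed under complement. Together with closure under $\cup$ and $\cap$, this shows $\mathcal{F}$ absorbs all finite Boolean combinations of fuzzy c.e.\ sets, proving the corollary. I expect the only part requiring genuine care to be the complement step—precisely relating the breaks of $m^f_\Sigma$ and $m^f_\Pi$—while the union and intersection arguments are purely formal manipulations of the normal form.
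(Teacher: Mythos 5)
Your proposal is correct, and it is organized differently from what the paper intends. The paper states the corollary as an immediate consequence of Theorem~\ref{theo:Boolean-fuzzy}: one syntactically normalizes an arbitrary Boolean combination into the disjunctive form $\bigcup_i (A_i\cap\overline{B_i})$ (pushing complements inward by De Morgan and double negation, so that complements only ever sit on fuzzy c.e.\ sets, then distributing) and applies the $(\Leftarrow)$ direction of the theorem once. You instead prove the semantically stronger statement that $\bigcup_n\Sigma^{-1}_n$ is closed under $\cup$, $\cap$, and complement; your union and intersection steps use the same normal-form machinery (and the same lattice identities the paper's normalization would need), but your complement step is a genuinely new ingredient not present in the paper, namely the fuzzy analogue of $\Pi^{-1}_n\subseteq\Sigma^{-1}_{n+1}$, proved directly on approximations. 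That lemma is correct as you argue it: prepending $g(x,0)=0$ costs no $\Sigma$-break since the jump to $f(x,0)=1$ is an increase, and $m^f_\Sigma$ and $m^f_\Pi$ coincide from the first strict change of $f(x,\cdot)$ onward (both are forced to $+1$ by a strict increase and to $-1$ by a strict decrease, and thereafter obey the same recursion), so their break counts differ by at most one. What your route buys is a closure property of the finite levels under complementation with an explicit bound $\Sigma^{-1}_n\ni A\Rightarrow\overline{A}\in\Sigma^{-1}_{n+1}$, mirroring the classical inclusion $\Sigma^{-1}_n\cup\Pi^{-1}_n\subseteq\Sigma^{-1}_{n+1}$; what the paper's route buys is brevity, since it never needs to complement anything other than a c.e.\ set.
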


\subsection{The Introduced Hierarchy Is Not Enough} \label{subsect:not_enough}

Here we show that the introduced levels of the Fuzzy Ershov Hierarchy do not exhaust the class of all $\Delta^0_2$ fuzzy subsets of $\omega$.

\begin{proposition}\label{prop:not-enough}
	There exists a $\Delta^0_2$ fuzzy set $A$ such that for any $\Delta^0_2$-approxi\-ma\-ti\-on $f(x,s)$ of $A$, the sequence $(m^f_{\Sigma}(0,s))_{s\in\omega}$ diverges when $s$ tends to infinity. In particular, $A$ is not $n$-c.e., for all $n\geq 1$.
\end{proposition}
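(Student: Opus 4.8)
The plan is to reduce the whole statement to a purely real-number fact together with a single diagonalization. I set $A(0)=\alpha$ for a suitable real $\alpha$ to be constructed, and $A(k)=0$ for all $k>0$; by Proposition~\ref{remark:Delta_2} the resulting fuzzy set is $\Delta^0_2$ as soon as $\alpha$ is a $\Delta^0_2$ real. Every $\Delta^0_2$-approximation $f(x,s)$ of $A$ restricts, at $x=0$, to a computable sequence of rationals converging to $\alpha$, and conversely any such sequence arises from some approximation of $A$ (put $f(k,s)=0$ for $k>0$). Hence the assertion ``for every $\Delta^0_2$-approximation $f$ the sequence $(m^f_\Sigma(0,s))_{s}$ diverges'' is equivalent to ``no computable approximation of $\alpha$ is eventually monotone.'' So it suffices to build a $\Delta^0_2$ real $\alpha\in[0;1]_{\mathbb R}$ admitting no eventually monotone computable approximation.

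The bridge between eventual monotonicity and the cut conditions of Proposition~\ref{prop:left-right} is an observation I would isolate as a lemma. Directly from the definition of $m^f_\Sigma$, the sequence $(m^f_\Sigma(0,s))_s$ converges exactly when $f(0,\cdot)$ is eventually monotone: once $m^f_\Sigma(0,\cdot)$ has settled at $+1$ every later step must satisfy $f(0,s)\leq f(0,s+1)$, and once it has settled at $-1$ every later step must satisfy $f(0,s)\geq f(0,s+1)$. Now if $f(0,\cdot)$ is non-decreasing from some stage $s_0$ on, then the shifted sequence $s\mapsto f(0,s_0+s)$ is computable (a constant shift preserves computability), non-decreasing, and still converges to $\alpha$, so $\{q\in\mathbb Q:q<\alpha\}$ is c.e.\ and $\alpha$ is left-c.e.; symmetrically, an eventually non-increasing tail forces $\alpha$ to be right-c.e. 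Consequently, if $\alpha$ is neither left-c.e.\ nor right-c.e., no computable approximation of it can be eventually monotone, which is exactly what the proposition requires. The ``in particular'' clause then follows immediately: were $A$ an $n$-c.e.\ fuzzy set via some $f$, Definition~\ref{def:fuzzy-n-c.e.} would bound the number of flips of $m^f_\Sigma(0,\cdot)$ by $n-1$, forcing that sequence to converge, a contradiction; the same argument applied to $\overline{A}$, whose value at $0$ is $1-\alpha$, rules out co-$n$-c.e.\ as well.

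It remains to construct a $\Delta^0_2$ real $\alpha$ that is neither left-c.e.\ nor right-c.e., which I would do by a finite-injury diagonalization meeting, for each index $e$, the requirements $L_e\colon W_e\neq\{q:q<\alpha\}$ and $R_e\colon W_e\neq\{q:q>\alpha\}$. I define $\alpha$ as the limit of a nested sequence of rational intervals $I_0\supseteq I_1\supseteq\cdots$ whose lengths tend to $0$, so that $\alpha=\bigcap_s I_s$ is automatically $\Delta^0_2$ (the left endpoints give a computable approximation). To meet $L_e$, I reserve inside the current interval a \emph{test rational} $q_e$ together with a small guard sub-interval and watch the enumeration of $W_e$: if $q_e$ ever enters $W_e$, I pull $\alpha$ into the part of the interval lying to the left of $q_e$, so that $q_e\in W_e$ yet $q_e\geq\alpha$, a mismatch with the left cut; if $q_e$ never enters, I keep $\alpha$ to the right of $q_e$, so that $q_e<\alpha$ yet $q_e\notin W_e$, again a mismatch. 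The requirements $R_e$ are symmetric, pushing $\alpha$ to the opposite side. Each action only refines the current interval and each requirement acts at most once after it receives a fresh test rational, so the standard priority bookkeeping satisfies every requirement while the intervals still nest down to a single real. The main obstacle is precisely this construction: one must lay out the test rationals and guard intervals so that the competing downward pulls of the $L_e$-strategies and the upward pulls of the $R_e$-strategies never exhaust the current interval, keeping infinitely many refinements available. A standard interval-halving layout governed by the priority ordering resolves this, but it is the only genuinely delicate point of the argument.
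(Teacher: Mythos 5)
Your proof is correct and follows essentially the same route as the paper's: both reduce the proposition to the existence of a $\Delta^0_2$ real that is neither left-c.e.\ nor right-c.e., and both observe that convergence of $(m^f_{\Sigma}(0,s))_{s}$ forces the approximation to be eventually monotone, which would make that real left-c.e.\ or right-c.e. The only difference is that the paper simply cites such a real (Theorem~5.1.10 of the Downey--Hirschfeldt reference) where you construct one by a priority diagonalization, and it sets $A(k)=\alpha$ for all $k$ rather than only at $k=0$; neither change affects the argument.
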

\begin{proof}
	Choose an arbitrary $\Delta^0_2$ real $\alpha$, which is not left-c.e. and not right-c.e. (see, e.g., Theorem~5.1.10 in~\cite{DH-book} for an example of such real). The desired fuzzy set $A$ is defined as follows: put $A(k) = \alpha$, for all $k\in\omega$. Since $\alpha$ is $\Delta^0_2$, Proposition~\ref{remark:Delta_2} implies that the set $A$ is $\Delta^0_2$.
	
	Towards a contradiction, assume that $f(x,s)$ is a $\Delta^0_2$-approximation of $A$ such that the sequence $(m^f_{\Sigma}(0,s))_{s\in\omega}$ converges. There are two possible cases.
	
	\emph{Case~1.} Suppose that $\lim_s m^f_{\Sigma}(0,s) = 1$. Then choose a stage $s^{\ast}$ such that $m^f_{\Sigma}(0,s) = 1$ for all $s\geq s^{\ast}$. It is not hard to show that the set $\{ q\in\mathbb{Q} \,\colon q < \alpha\}$ is equal to 
	$\{ q\,\colon (\exists s \geq s^{\ast})[ q < f(0,s) ]\}$,
	and hence, this set is c.e. Then the real $\alpha$ is left-c.e., which gives a contradiction.

	\emph{Case~2.} Otherwise, $\lim_s m^f_{\Sigma}(0,s) = -1$. Then a similar argument shows that the real $\alpha$ is right-c.e.~--- again, a contradiction.
	
	We deduce that our fuzzy set $A$ has all desired properties.
	\qed
\end{proof}


\section{Concluding Remarks: Broadening the Fuzzy Ershov Hierarchy}

By way of conclusion, we briefly discuss two natural options for, first, \emph{refining} the Fuzzy Ershov Hierarchy, and, secondly, \emph{extending} the finite levels of the hierarchy. We leave many formal details to future work. In particular, even though we obtained a number of results concerning the new hierarchies discussed below, due to space constraints, we defer these results to an extended version of the present paper. 

\subsection{Counting Updates} 

We say that a $\Delta^0_2$-approximation of a fuzzy set $A$ ``has an update'' if $f(x,s+1)\neq f(x,s)$, for some $x,s\in\omega$. Observe that our notion of mind change, as in Definition~\ref{def:fuzzy-n-c.e.}, keeps track only of those updates which determine a change of monotonicity in the approximating function: e.g., if $f(x,s)$ is a $\Delta^0_2$-approxi\-ma\-ti\-on of a fuzzy set $A$, $m^f_{\Sigma}(x,s)=1$, and $f(x,s+1)>f(x,s)$, then $m^f_{\Sigma}(x,s+1)$ remains equal to $1$. So, one may explore what happens if one keeps track of \emph{all} updates for a given $\Delta^0_2$-approximation. To further motivate such approach, consider the example by Harkleroad~\cite{Harkleroad-84}:
\begin{example}\label{example_Harkleroad}
As usual, $K$ denotes the Halting problem. Define
\[
	H(x) = \begin{cases}
		1, & \text{if } x \in K,\\
		1/2, & \text{otherwise}.
	\end{cases}
\]
\end{example}

It is easy to see that $H$ is a fuzzy c.e. set. But note that, for any c.e.\ approximation $h$ of $H$ (recall that one assumes $h(x,0) = 0$ for all $x$), there must be an infinite crisp set $Z\subseteq \omega$ such that $h$ requires at least \emph{two} updates to approximate each $x\in Z$ (as otherwise,  $K=\{x \,\colon (\exists s)(h(x,s)=1) \}$ would be computable).

So, to distinguish $H$ from fuzzy c.e.\ sets which can be approximated  with at most one update, we propose the following new hierarchy:

\begin{definition} \label{def:inner_fuzzy_c_e}
	A fuzzy set $A$ is \emph{$[n]_1$-c.e.} if there is a computable function $f\colon \omega\times\omega \to [0,1]_{\mathbb{Q}}$ such that for all $x\in\omega$, we have:
	\begin{itemize}		
		\item $f(x,0) = 0$ and $\lim_{s} f(x,s)=A(x)$;
		
		\item $(\forall s)(f(x,s+1)\geq f(x,s))$;
		
		\item $|\{s\,\colon f(x,s+1)\neq f(x,s)\}| \leq n$.
	\end{itemize}
\end{definition}

For simplicity, the definition above is limited to the first level of the Fuzzy Ershov Hierarchy. But clearly, one could similarly stratify each class of fuzzy $n$-c.e.\ sets as follows. Intuitively, a set $A$ is fuzzy $[n_1,\ldots,n_m]_m$-c.e. if there is a $\Delta^0_2$-approximation $f$ to $A$ that, for each $x$, can go up at most $n_1$ times, and then down at most $n_2$ times, etc.~--- for $m$-many ups and downs. 

In a future work, we plan to carefully study all such refinements of the Fuzzy Ershov Hierarchy, together with their interplay.


\subsection{Going Transfinite}

In this paper, we talked only about the finite levels of the Fuzzy  Ershov Hierarchy. Similarly to the classical case, we could introduce transfinite levels. These levels are labelled by the notations of constructive ordinals, taken from Kleene's $\mathcal{O}$ (see \S\,11.7 in~\cite{Rogers-Book}).

\begin{definition}
Let $a$ be an element of $\mathcal{O}$. We say that a fuzzy $\Delta^0_2$ set $A$ \emph{belongs to the class $\Sigma^{-1}_a$} if there exist a $\Delta^0_2$-approximation $f(x,s)$ and a computable ``counting'' function $h\colon \omega^2\to \{ b\in \mathcal{O} \,\colon b<_{\mathcal{O}} a\}$ such that for all $x$ and $s$:
\begin{itemize}
	\item $\lim_s f(x,s) = A(x)$ and $f(x,0) = 0$;
	
	\item $h(x,s+1) \leq_{\mathcal{O}} h(x,s)$; and
	
	\item if $m^f_{\Sigma}(x,s+1) \neq m^f_{\Sigma}(x,s)$, then $h(x,s+1) \neq h(x,s)$.
\end{itemize}
\end{definition}

We note the following: if $a$ is the notation of a finite ordinal $n\geq 1$, then the $\Sigma^{-1}_a$ sets are \emph{precisely} the fuzzy $n$-c.e. sets.

In an extended version of this work, we will prove that, similarly to Proposition~\ref{prop:not-enough}, the classes $\Sigma^{-1}_a$, $a\in\mathcal{O}$, still \emph{do not exhaust} all fuzzy $\Delta^0_2$ sets. This provides a \emph{major difference} with the classical case: every crisp $\Delta^0_2$ set belongs to $\Sigma^{-1}_a$ for some $a\in\mathcal{O}$ (Theorem~6 in~\cite{Ershov-II}, see also Theorem~4.3 in~\cite{SYY-09}).

\subsubsection*{Acknowledgements.}
The authors are grateful to Professor Murat Ramazanov for his hospitality during their visit to Qaraghandy, Kazakhstan, in June 2019. Part of the research contained in the paper was carried out while Bazhenov, San Mauro, and Ospichev were visiting the Department of Mathematics of Nazarbayev University, Nur-Sultan.

%
%
%
\bibliographystyle{plain}
\bibliography{FuzzyReferences}

\end{document}